\documentclass[11pt]{article}
\usepackage[margin=1in]{geometry}
\usepackage{amsmath,amssymb,amsthm}
\usepackage{booktabs}
\usepackage{float}
\newfloat{algorithm}{tbp}{loa}
\floatname{algorithm}{Algorithm}
\usepackage[labelfont=bf,font=small]{caption}
\usepackage{microtype}
\usepackage[hidelinks]{hyperref}
\usepackage{xcolor}
\usepackage{tikz}
\usetikzlibrary{decorations.pathreplacing}

\newtheorem{proposition}{Proposition}
\newtheorem{lemma}{Lemma}

\newcommand{\Aut}{\operatorname{Aut}}
\newcommand{\dd}{d}

\title{\bf The number of labeled partial orders and topologies on 19 points}
\author{Rafael Ayala\thanks{\texttt{rayala@us.es}}}
\date{}

\begin{document}
\maketitle

\begin{abstract}
We report the exact value of the number of labeled partially ordered sets (equivalently, labeled
$T_0$ topologies) on $19$ points,
\[
P(19)=646{,}099{,}441{,}937{,}791{,}106{,}493{,}755{,}218{,}560{,}442{,}089{,}979
\]
a $39$-digit integer extending OEIS \texttt{A001035}, whose largest previously computed term was $P(18)$
(Brinkmann and McKay~\cite{BrinkmannMcKay}). By the Stirling transform we also obtain the number of labeled topologies on
$19$ points, \texttt{A000798}$(19)=689{,}054{,}943{,}207{,}246{,}404{,}281{,}592{,}791{,}142{,}107{,}048{,}261$.
Our route is the Ern\'e--Stege moment reduction, which expresses $P(19)$ through a few sums of antichain
counts over the posets on at most $16$ points. All of these are available from the posets on at most $15$
points (whose number is catalogued, and which standard software generates on demand), except a single moment
over the $16$-point posets. That moment is obtained not by enumerating the $16$-point posets but by inserting
a single element into the $15$-point ones, with a per-parent kernel that advances the sum at the cost of
computing the parent's own antichain count. The result passes several independent checks, among them the residue
predicted by the modular periodicity of \texttt{A001035} and the recovery from the same sweep of the known
count $P(16)$ and the Ern\'e--Stege moments $G(16,1)$ and $G(16,2)$. We also report the moments $G(16,3)$ and $G(16,4)$, the latter an input to
the analogous computation for $20$ points.
\end{abstract}

\section{Introduction}

Write $P(n)$ for the number of partial orders on a labeled $n$-element set (OEIS \texttt{A001035}); by the
Alexandrov correspondence, this is also the number of $T_0$ topologies on $n$ points. Write $T(n)$ for the
number of all labeled topologies on $n$ points (\texttt{A000798}). The two are related by
the Stirling transform
\begin{equation}\label{eq:stirling}
T(n)=\sum_{k=0}^{n} S(n,k)\,P(k)
\end{equation}
with $S(n,k)$ the Stirling numbers of the second kind. Their growth is super-exponential:
$\log_2 P(n)\sim n^2/4$ (Kleitman and Rothschild~\cite{KR}).

Each advance in the exact values has required a substantial computation.
Ern\'e and Stege~\cite{ErneStege} obtained the labeled counts through $n=14$;
Heitzig and Reinhold~\cite{HeitzigReinhold} generated the unlabeled posets on $14$ points and, through a
formula of Ern\'e, obtained the labeled counts up to $P(16)$ without generating the larger posets; and
Brinkmann and McKay~\cite{BrinkmannMcKay} enumerated the unlabeled posets on $16$ points and obtained the
labeled counts $P(17)$ and $P(18)$. The largest published terms
are thus $P(18)$ for \texttt{A001035}, \texttt{A000112}$(16)$ for the unlabeled posets, and
\texttt{A000798}$(18)$ for the topologies; the labeled frontier has stood at $n=18$ since the enumeration of Brinkmann and McKay~\cite{BrinkmannMcKay}.

Here, we advance the labeled count to $n=19$. The strategy, developed in Section~\ref{sec:method}, is to reduce
$P(19)$ to a small set of moments (sums of antichain counts over smaller posets), only one of which is
not already known, and then to compute that single moment by extending the posets on $15$ points rather
than enumerating those on $16$. A per-parent kernel (Section~\ref{sec:kernel}) makes the $15$-point sweep
affordable. The Stirling transform then gives the topology count $T(19)$, and the same sweep produces the
moments $G(16,3)$ and $G(16,4)$.

\section{Method}\label{sec:method}

\subsection{The Ern\'e--Stege moment reduction}
For a finite poset $Q$, an \emph{order ideal} (or \emph{down-set}) is a subset $D\subseteq Q$ closed
downward: if $x\in D$ and $y\le x$ in $Q$, then $y\in D$. An \emph{antichain} is a subset of $Q$ whose
elements are pairwise incomparable. The two are in bijection: an order ideal corresponds to the antichain
of its maximal elements, and an antichain to the down-set it generates, so a poset has equally many of
each, the common count we denote $\dd(Q)$. The order ideals of $Q$, ordered by inclusion, themselves form
a distributive lattice $\mathcal{J}(Q)$, the \emph{ideal lattice} (meet is intersection, join is union); its size is
$|\mathcal{J}(Q)|=\dd(Q)$.

For non-negative integers $m$ and $k$, the \emph{automorphism-weighted antichain-power moment} $G(m,k)$ is
the sum of $\dd(Q)^k$ over all labeled posets $Q$ on $m$ points:
\begin{equation}\label{eq:moment}
G(m,k)\;=\;\sum_{\substack{Q\ \text{labeled poset}\\ \text{on } [m]}} \dd(Q)^k
\;=\;\sum_{Q\in\mathcal P_m}\frac{m!}{|\Aut Q|}\,\dd(Q)^k
\end{equation}
The second equality regroups these by
isomorphism type. Here $\mathcal P_m$ ranges over the isomorphism classes of $m$-element posets, and
$\Aut Q$ is the \emph{automorphism group} of $Q$: its \emph{symmetries}. A symmetry is a relabeling of the
points that leaves the order relation exactly as it was, so that the relabeled poset is indistinguishable
from the original. We write $|\Aut Q|$ for the number of such symmetries. Relabeling the $m$ points of $Q$ in
all $m!$ possible ways yields exactly $m!/|\Aut Q|$ distinct labeled posets (two relabelings give the same
one precisely when they differ by an automorphism), and all of them share the same value $\dd(Q)$, since a
symmetry permutes the order ideals among themselves. The second form is the one we compute, as it sweeps
the \emph{unlabeled} posets, far fewer than the labeled ones; the labeled posets are never enumerated
individually.

Evaluating the second form of~\eqref{eq:moment} requires visiting each isomorphism class of $m$-point
posets exactly once. The classes are not available in advance as a list; they are produced one
representative at a time by a \emph{generator}.
A poset is built up a point at a time, and among all the ways a given shape can arise during this process
exactly one is accepted, namely the build in which each new point extends a fixed ``canonical'' labeling of
the shape so far, while every other route to the same shape is recognised and discarded as it occurs. This is
the canonical-construction-path, or \emph{orderly}, method of McKay~\cite{McKay98}: it emits each unlabeled
poset once, in a single deterministic stream, and never compares two independently built posets for
isomorphism. That last property is what lets the stream be cut into disjoint pieces and run in parallel
(Section~\ref{sec:sweep}). The canonical-form computation that decides acceptance also returns the
order of the automorphism group $|\Aut Q|$, and hence the labeling weight $m!/|\Aut Q|$
in~\eqref{eq:moment}, at no extra cost. Concretely, for $m=15$ this stream visits the
$A000112(15)\approx6.83\times10^{13}$ unlabeled posets in place
of the $P(15)\approx7.8\times10^{25}$ labeled ones, a factor close to $15!\approx1.3\times10^{12}$
(almost every poset has no symmetry beyond the identity, so it has the full count of $15!$ distinct labelings).
In short, $\mathcal P_m$ is realised as the output of a generator, not as an explicit list, and the weight
$m!/|\Aut Q|$ is read off the same generation step.

These moments enter a reduction of the labeled count~\cite{ErneStege} (recorded on \texttt{A001035}). It
helps to picture the moments $G(m,k)$ as a two-index array, with the number of points $m$ along one axis
and the power $k$ along the other. The reduction that produces $P(N)$ uses only the entries with
$m+k=N$, which lie along a single diagonal of this array; for that reason we call them the \emph{diagonal
moments} $G(m,N{-}m)$ (Figure~\ref{fig:diagonal}). Passing from the target $N{-}1$ to $N$ moves attention from one diagonal to the
next, a step we will repeatedly describe as advancing one diagonal. The identity that reaches $P(N)$ from
$P(N{-}1)$ together with these diagonal moments is
\begin{equation}\label{eq:reduction}
P(N) \;=\; \binom{N{+}1}{2}\,P(N{-}1)\;-\;\sum_{m=0}^{N-3}(-1)^{\,N-m}\binom{N{-}1{-}m}{2}\binom{N}{m}\,G(m,\,N{-}m)
\end{equation}

\begin{figure}[t]
\centering
\begin{tikzpicture}[scale=0.8,>=stealth,
   used/.style={draw=blue!55!black,fill=blue!16,minimum size=11pt,inner sep=0pt},
   excl/.style={draw=gray!45,fill=gray!8,minimum size=11pt,inner sep=0pt},
   front/.style={draw=black,fill=orange!80,minimum size=11pt,inner sep=0pt},
   frontnext/.style={draw=orange!85!black,fill=orange!22,minimum size=11pt,inner sep=0pt}]
  \foreach \m in {0,...,8}{\foreach \k in {0,...,8}{\fill[gray!30] (\m,\k) circle (1.3pt);}}
  \draw[red!75!black,dashed,thick] (-0.4,7.4) -- (7.4,-0.4);
  \draw[blue!55!black,thick] (-0.4,8.4) -- (8.4,-0.4);
  \draw[green!55!black,dashed,thick] (0.6,8.4) -- (8.4,0.6);
  \begin{scope}[shift={(4.45,5.95)}]
    \fill[white,opacity=0.9] (-0.18,-0.32) rectangle (3.25,2.05);
    \draw[green!55!black,dashed,thick] (0,1.7) -- (0.6,1.7);
       \node[anchor=west,font=\scriptsize] at (0.66,1.7) {$m{+}k=N{+}1$};
    \draw[blue!55!black,thick] (0,0.85) -- (0.6,0.85);
       \node[anchor=west,font=\scriptsize] at (0.66,0.85) {$m{+}k=N$};
    \draw[red!75!black,dashed,thick] (0,0.0) -- (0.6,0.0);
       \node[anchor=west,font=\scriptsize] at (0.66,0.0) {$m{+}k=N{-}1$};
  \end{scope}
  \foreach \m/\k in {0/8,1/7,2/6,3/5,4/4}{\node[used] at (\m,\k) {};}
  \node[front] (fr) at (5,3) {};
  \node[frontnext] (frn) at (6,3) {};
  \node[orange!55!black,font=\scriptsize,align=left,anchor=west] at (6.5,3.0)
       {$G(N{-}2,3)$:\\ next frontier};
  \foreach \m/\k in {6/2,7/1,8/0}{\node[excl] at (\m,\k) {};}
  \draw[->,very thick,orange!75!black] (fr.east) -- (frn.west);
  \draw[->] (-0.5,0) -- (9.6,0) node[right] {$m$ (points)};
  \draw[->] (0,-0.5) -- (0,9.4) node[above] {$k$ (power)};
  \foreach \i in {0,...,8}{\node[below] at (\i,-0.45) {\scriptsize \i};
                           \node[left] at (-0.45,\i) {\scriptsize \i};}
  \node[align=center] at (4.0,-2.15)
     {\footnotesize frontier $G(N{-}3,3)$ for $P(N)$\\ \footnotesize (the hardest term: largest $m$ on the diagonal)};
  \draw[->,thick] (4.0,-1.55) -- (4.85,2.7);
\end{tikzpicture}
\caption{The moment array $G(m,k)$, indexed by the number of points $m$ and the power $k$. For a target
$N$, the reduction~\eqref{eq:reduction} sums only the moments on the diagonal $m+k=N$; the figure is drawn
for the small target $N=8$. The filled cells (those with $k\ge3$) are the terms that actually appear, and
the lowest of them, $G(N{-}3,3)$ (orange), is the computational frontier. The faded cells ($k<3$) are
absent from the sum. The two dashed lines are the neighbouring diagonals $m+k=N{-}1$ (red) and
$m+k=N{+}1$ (green).
Because the frontier always sits at $k=3$, raising the target by one slides it one step to the right along
that row, from $G(N{-}3,3)$ to $G(N{-}2,3)$ (open orange), and shifts the whole problem outward to the next
diagonal: this is what we mean by advancing one diagonal. In the present work $N=19$, so the diagonal runs
from $G(0,19)$ down to the frontier $G(16,3)$, and every entry with $m\le14$ is already known and absorbed
into the constant $A$ of~\eqref{eq:Adef}.}
\label{fig:diagonal}
\end{figure}

Setting $N=19$ and writing out the two largest terms of the sum (the $m=16$ term, contributing
$+\binom{19}{3}G(16,3)=+969\,G(16,3)$, and the $m=15$ term, contributing
$-\binom{3}{2}\binom{19}{4}G(15,4)=-11628\,G(15,4)$), this becomes
\begin{equation}\label{eq:p19full}
P(19) \;=\; 190\,P(18)\;+\;969\,G(16,3)\;-\;11628\,G(15,4)
\;-\!\!\sum_{m=0}^{14}(-1)^{\,19-m}\binom{18{-}m}{2}\binom{19}{m}\,G(m,19{-}m)
\end{equation}
On the right, only the count $P(18)$ is a previously published value; every other term is a moment
computed in this work. Each lower diagonal moment $G(m,19{-}m)$ with $m\le 14$ is, by its
definition~\eqref{eq:moment}, a sum of $\dd(Q)^{19-m}$ over the labeled posets $Q$ on $m$ points, which we
evaluate by directly sweeping those posets (tallying the antichain count of each). Gathering $P(18)$ and these $m\le14$
moments into one constant,
\begin{equation}\label{eq:Adef}
A \;:=\; 190\,P(18)\;-\!\!\sum_{m=0}^{14}(-1)^{\,19-m}\binom{18{-}m}{2}\binom{19}{m}\,G(m,19{-}m)
\end{equation}
the identity becomes simply
\begin{equation}\label{eq:p19}
P(19) \;=\; A \;-\; 11628\,G(15,4)\;+\;969\,G(16,3)
\end{equation}
with the constant and the supporting moment given by
\begin{align*}
A &= 5{,}325{,}468{,}436{,}052{,}842{,}213{,}619{,}347{,}019{,}464{,}238{,}237{,}629\\
G(15,4) &= 846{,}002{,}793{,}378{,}179{,}474{,}085{,}677{,}125{,}510{,}787{,}278
\end{align*}
Both remaining moments are computed at the same, $15$-point, scale. $G(15,4)$ is a direct sweep of the
catalogued $15$-point posets (summing $\dd(Q)^4$ over them), while the $16$-point moment $G(16,3)$ is
harvested by inserting a single element into those very same posets (Section~\ref{sec:harvest}), forming
no $16$-point poset. $G(16,3)$ is the frontier moment, and obtaining it occupies the rest of this section. With both in hand, $P(19)$ follows
from~\eqref{eq:p19} by a single multiplication and addition. (\eqref{eq:reduction} likewise reproduces the
known $P(17)$ and $P(18)$, which need only moments through the $15$-point posets; $P(19)$ is the first to
require one from the $16$-point level.)

\subsection{Isomorphism-free harvest of the 16-point moment}\label{sec:harvest}
The remaining task is to evaluate $G(16,3)$ (and, in the same sweep, $G(16,k)$ for $k\le 4$). Generating
all roughly $4.48\times10^{15}$ unlabeled $16$-point posets is infeasible. Instead we harvest the $16$-point
moment from the $A000112(15)=68{,}275{,}077{,}901{,}156$ unlabeled $15$-point posets by one-point
insertion (Figure~\ref{fig:insertion}). This follows Heitzig and Reinhold~\cite{HeitzigReinhold}, who
likewise harvest a labeled count one diagonal above their generated objects with an algorithm that
``avoids isomorphism tests and can therefore be parallelized.'' Throughout, $Q+z$ denotes the \emph{child}
formed by inserting a new point $z$ into $Q$. Such an insertion is \emph{admissible} when the elements
below $z$ form a down-set, those above an up-set, and every element below lies under every element above.

\begin{figure}[tbp]
\centering
\begin{tikzpicture}[x=1cm,y=0.9cm,>=stealth,
   pt/.style={circle,fill=black,inner sep=1.5pt},
   zz/.style={circle,fill=orange!85!black,draw=black,inner sep=2pt},
   ed/.style={thick},
   gv/.style={->,thick,shorten >=2pt,shorten <=2pt},
   gvr/.style={->,thick,shorten >=3.5pt,shorten <=3.5pt},
   bad/.style={->,very thick,red!75!black,shorten >=2pt,shorten <=2pt},
   rl/.style={font=\scriptsize},
   dl/.style={font=\scriptsize,orange!50!black},
   hd/.style={font=\scriptsize\itshape},
   ti/.style={font=\tiny}]
  \node[hd,anchor=west] at (-0.45,3.2){(1) insert $z$ into $Q$ in every admissible position};
  \node[pt] (pa) at (0,1.55){}; \node[pt] (pb) at (0,2.20){};
  \draw[gv] (pa)--(pb); \node[rl] at (0,1.15){$Q$};
  \draw[->,very thick] (0.40,1.875) -- (1.30,1.875);
  \begin{scope}[shift={(2.0,1.55)}]
    \node[zz,label=right:{\tiny$z$}] at (0,0){}; \node[pt] at (0,0.55){}; \node[pt] at (0,1.10){};
    \draw[gvr] (0,0)--(0,0.55); \draw[gvr] (0,0.55)--(0,1.10); \node[dl] at (0,-0.40){$\dd{=}4$};\end{scope}
  \begin{scope}[shift={(3.1,1.55)}]
    \node[pt] at (0,0){}; \node[zz,label=right:{\tiny$z$}] at (0,0.55){}; \node[pt] at (0,1.10){};
    \draw[gvr] (0,0)--(0,0.55); \draw[gvr] (0,0.55)--(0,1.10); \node[dl] at (0,-0.40){$\dd{=}4$};\end{scope}
  \begin{scope}[shift={(4.2,1.55)}]
    \node[pt] at (0,0){}; \node[pt] at (0,0.55){}; \node[zz,label=right:{\tiny$z$}] at (0,1.10){};
    \draw[gvr] (0,0)--(0,0.55); \draw[gvr] (0,0.55)--(0,1.10); \node[dl] at (0,-0.40){$\dd{=}4$};\end{scope}
  \begin{scope}[shift={(5.3,1.55)}]
    \node[pt] at (-0.16,0.28){}; \node[pt] at (-0.16,0.82){}; \draw[gvr] (-0.16,0.28)--(-0.16,0.82);
    \node[zz,label=right:{\tiny$z$}] at (0.24,0.55){}; \node[dl] at (0.04,-0.40){$\dd{=}6$};\end{scope}
  \node[rl] at (6.05,2.1){$\cdots$};
  \draw[decorate,decoration={brace,amplitude=4pt,mirror},gray!65] (1.7,1.0) -- (6.25,1.0);
  \node[rl,anchor=north] at (3.95,0.72){summed: $\;\displaystyle\sum_{z}\dd(Q{+}z)^k$};
  \node[hd,anchor=west] at (7.6,3.2){(2) inadmissible placements, excluded};
  \node[pt,label=left:{\tiny$a$}] (a) at (8.2,1.25){};
  \node[pt,label=left:{\tiny$b$}] (b) at (8.2,2.05){};
  \node[zz,label=right:{\tiny$z$}] (z) at (9.0,2.45){};
  \draw[gv] (a)--(b); \draw[gv] (b)--(z);
  \draw[bad,dashed] (a) to[bend right=48] (z);
  \node[ti,red!70!black,anchor=west] at (9.05,1.55){forces $a{<}z$};
  \node[ti,anchor=north] at (8.7,0.9){(a) $z{>}b$, $a\,\|\,z$};
  \node[pt,label=below:{\tiny$a$}] (a2) at (11.0,1.35){};
  \node[pt,label=below:{\tiny$b$}] (b2) at (12.0,1.35){};
  \node[zz,label=above:{\tiny$z$}] (z2) at (11.5,2.35){};
  \draw[gv] (a2)--(b2); \draw[bad] (b2)--(z2); \draw[bad] (z2)--(a2);
  \node[ti,anchor=north] at (11.5,0.9){(b) cycle $a{<}b{<}z{<}a$};
  \node[hd,anchor=west] at (-0.45,-0.35){(3) then add the weighted contribution of every parent};
  \node[pt] at (0,-1.45){}; \node[pt] at (0,-0.95){}; \draw[gvr] (0,-1.45)--(0,-0.95);
  \begin{scope}[shift={(0.85,0)}]
    \node[pt] at (0,-1.45){}; \node[pt] at (-0.2,-0.95){}; \node[pt] at (0.2,-0.95){};
    \draw[gvr] (0,-1.45)--(-0.2,-0.95); \draw[gvr] (0,-1.45)--(0.2,-0.95);\end{scope}
  \begin{scope}[shift={(1.7,0)}]
    \node[pt] at (-0.12,-1.2){}; \node[pt] at (0.12,-1.2){};\end{scope}
  \node[rl] at (2.3,-1.2){$\cdots$};
  \node[rl,anchor=north] at (1.0,-1.75){every parent $Q$ \,(all ${\approx}6.83\times10^{13}$)};
  \draw[->,very thick] (2.85,-1.2) -- (3.75,-1.2);
  \node[anchor=west] at (3.9,-1.2)
     {$\displaystyle G(m,k)=\sum_{Q}\frac{(m{-}1)!}{|\Aut Q|}\sum_{z}\dd(Q{+}z)^k$};
\end{tikzpicture}
\caption{Harvesting the $m$-point moment by one-point insertion (throughout, an arrow $x\to y$ means
$x<y$). \textbf{(1)} Into each unlabeled parent
$Q$ on $m{-}1$ points, a new point $z$ (orange) is inserted in \emph{every} admissible position (below,
within, or above the existing order, or incomparable beside it); the antichain count $\dd(Q{+}z)$ of each
child (shown beneath) is gathered into the per-parent sum $\sum_z\dd(Q{+}z)^k$. \textbf{(2)} Placements that
violate admissibility are excluded (forced or contradictory
relations in red): putting $z$ above $b$ while leaving $a$ incomparable would force
$a<z$ by transitivity, and putting $z$ above $b$ but below $a$ would close the cycle $a<b<z<a$. \textbf{(3)}
The per-parent sums are then added over \emph{all} parents, each weighted by $w=(m{-}1)!/|\Aut Q|$, giving
$G(m,k)=\sum_Q w\sum_z\dd(Q{+}z)^k$. In the computation $m=16$ and the parents are the roughly
$6.83\times10^{13}$ unlabeled $15$-point posets, each swept once; the $16$-point posets are never listed.}
\label{fig:insertion}
\end{figure}

The basis is a deletion correspondence. Every labeled $16$-poset has a unique element of largest label;
deleting it leaves a labeled $15$-poset $Q'$ together with the record of how the deleted element sat in
it, that is, the set of elements that lay below it and the set that lay above. Conversely, $Q'$ and any such
admissible record rebuild the original poset uniquely, by reinstating the deleted element with exactly
those relations. Deletion is therefore a bijection between the labeled $16$-posets and the pairs
(labeled $15$-poset, admissible insertion), so summing $\dd^k$ over the former equals summing the child
count $\dd(Q'+z)^k$ over the latter. Two labelings of the same $15$-point shape admit the same insertions
and produce children with the same antichain counts, so the labeled $15$-posets may be grouped by
isomorphism type: a type $Q$ contributes its $15!/|\Aut Q|$ distinct labelings, all with identical
insertion data. The sum thus collapses to one over unlabeled parents $Q$, each weighted by
$w=15!/|\Aut Q|$ and ranging over \emph{all} admissible insertions, and this reconstructs $G(16,k)$ exactly.
For general $m$ this is the standard deletion correspondence~\cite{HeitzigReinhold,BrinkmannMcKay},
\begin{equation}\label{eq:harvest}
G(m,k)\;=\;\sum_{Q}\frac{(m{-}1)!}{|\Aut Q|}\sum_{z}\dd(Q+z)^k,
\end{equation}
the outer sum over the unlabeled $(m{-}1)$-point posets $Q$ and the inner over the admissible insertions $z$;
Proposition~\ref{prop:kernel} below evaluates the inner sum without ever forming a child.

An insertion of a new element $z$ into $Q$ records how $z$ relates to each existing element: every element
is placed \emph{below} $z$, placed \emph{above} $z$, or left \emph{incomparable} to it. The elements below
$z$ form an order ideal $D$ of $Q$, those above form a filter $U$ disjoint from $D$, and the placement is
admissible exactly when every element of $D$ lies below every element of $U$ in $Q$ (otherwise transitivity
would force a further relation; see Figure~\ref{fig:insertion}(2)). Notice that the inserted element is
maximal precisely when $U=\varnothing$. Write $J=Q\setminus U$ for the order ideal of elements \emph{not} above $z$, namely
those below $z$ together with those incomparable to it, a single element of the ideal lattice
$\mathcal{J}(Q)$.

To count the order ideals (equivalently antichains) of the child $Q+z$, we split them by whether they
contain the new point $z$ (Figure~\ref{fig:idealsplit}).

\begin{lemma}\label{lem:childcount}
Write $c_{\mathrm{sub}}(J)=\#\{\text{ideals}\subseteq J\}$ and $c_{\sup}(D)=\#\{\text{ideals}\supseteq D\}$ for the
numbers of order ideals of $Q$ contained in $J$ and containing $D$, respectively. Then the antichain count
of the child is
\begin{equation}\label{eq:childcount}
\dd(Q+z) \;=\; c_{\mathrm{sub}}(J) \;+\; c_{\sup}(D).
\end{equation}
\end{lemma}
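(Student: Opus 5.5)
We split the order ideals $I$ of the child $Q+z$ according to whether $z\in I$, and build a bijection for each class onto a family of order ideals of $Q$. Throughout we use the setup just given. $D$ is the down-set of elements below $z$ and $U$ is the up-set of elements above $z$, with every element of $D$ below every element of $U$. The set $J=Q\setminus U$ is an order ideal of $Q$ with $D\subseteq J$. The order on $Q+z$ is the order of $Q$ together with $d<z<u$ for $d\in D$, $u\in U$; admissibility guarantees that this relation is transitive and contains no new relations among old points.

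\emph{Ideals not containing $z$.} Let $I$ be an ideal of $Q+z$ with $z\notin I$. Then $I$ is a subset of $Q$, and it is an ideal of $Q$ because the order of $Q+z$ restricts to that of $Q$. No element $u\in U$ can lie in $I$, since $z<u$ would force $z\in I$. Hence $I\subseteq J$. Conversely, let $I$ be any ideal of $Q$ with $I\subseteq J$. Any element below some $x\in I$ in $Q+z$ is either an element of $Q$ below $x$, hence in $I$, or is $z$ itself. The latter happens only when $x\in U$, which is excluded by $I\subseteq J$. So $I$ is an ideal of $Q+z$ avoiding $z$. These ideals are therefore counted exactly by $c_{\mathrm{sub}}(J)$.

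\emph{Ideals containing $z$.} Map $I\mapsto I\setminus\{z\}$. If $z\in I$, then $D\subseteq I$ because everything below $z$ lies in $I$. The set $I\setminus\{z\}$ is still an ideal of $Q$, since no element of $Q$ has $z$ as its only route downward: relations among old points are those of $Q$. So the image is an ideal of $Q$ containing $D$. Conversely, let $I'$ be an ideal of $Q$ with $I'\supseteq D$, and set $I=I'\cup\{z\}$. The elements below $z$ are exactly $D\subseteq I'$. Elements below an old point $x\in I'$ are old elements below $x$, which lie in $I'$, or possibly $z$, which lies in $I$. So $I$ is an ideal of $Q+z$. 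The two maps are mutually inverse, and these ideals are counted by $c_{\sup}(D)$.

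The two classes are disjoint and together exhaust all ideals of $Q+z$, so $\dd(Q+z)=c_{\mathrm{sub}}(J)+c_{\sup}(D)$. The only delicate point is the claim that the order on $Q+z$ adds no relations between old elements beyond those of $Q$. This is exactly where admissibility enters: for $d\in D$ and $u\in U$, the chain $d<z<u$ already holds in $Q$ as $d<u$. Without that, "ideal of $Q+z$ avoiding $z$" and "ideal of $Q$" would no longer coincide.
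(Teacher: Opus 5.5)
Your proof is correct and takes the same approach as the paper. You split the ideals of $Q+z$ by whether they contain $z$. Those avoiding $z$ are identified with the ideals of $Q$ inside $J$, and those containing $z$, after removing $z$, with the ideals of $Q$ containing $D$. You also check the converse directions explicitly, along with the fact that admissibility makes the order of $Q+z$ restrict to that of $Q$; the paper leaves both of these implicit.
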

\begin{proof}
If an
ideal \emph{omits} $z$, it can hold no element lying above $z$, since a down-set containing such an element
would have to contain $z$ as well; the ideal therefore lies inside $J$, and these are exactly the order
ideals of $J$, numbering $c_{\mathrm{sub}}(J)$. If instead an ideal \emph{contains} $z$, then being downward
closed it contains all of $D$, and conversely every order ideal of $Q$ that contains $D$ becomes one of
these by adjoining $z$; these number $c_{\sup}(D)$. The two cases are disjoint and exhaustive.
\end{proof}
\begin{figure}[tbp]
\centering
\begin{tikzpicture}[x=1cm,y=1cm,>=stealth,
   pt/.style={circle,fill=black,inner sep=1.7pt},
   zp/.style={circle,fill=orange!85!black,draw=black,inner sep=2.2pt},
   gv/.style={->,thick,shorten >=2pt,shorten <=2pt},
   halo/.style={circle,fill=blue!16,inner sep=4.5pt},
   ti/.style={font=\scriptsize\itshape},
   tg/.style={font=\scriptsize,blue!50!black}]
  \begin{scope}
    \node[ti] at (0.55,3.05){(a) an ideal \emph{without} $z$};
    \node[halo] at (0,0){}; \node[halo] at (1.2,0){};
    \node[pt,label=left:{\scriptsize$a$}] (a) at (0,0){};
    \node[pt,label=left:{\scriptsize$b$}] (b) at (0,0.8){};
    \node[zp,label=left:{\scriptsize$z$}] (z) at (0,1.6){};
    \node[pt,label=above:{\scriptsize$c$}] (c) at (0,2.4){};
    \node[pt,label=right:{\scriptsize$w$}] (w) at (1.2,0){};
    \draw[gv] (a)--(b); \draw[gv] (b)--(z); \draw[gv] (z)--(c);
    \node[tg,align=center] at (0.55,-0.85){no element above $z$,\\ so it lies in $J$:\ $c_{\mathrm{sub}}(J)$ of these};
  \end{scope}
  \begin{scope}[shift={(5.0,0)}]
    \node[ti] at (0.55,3.05){(b) an ideal \emph{with} $z$};
    \node[halo] at (0,0){}; \node[halo] at (0,0.8){}; \node[halo] at (0,1.6){};
    \node[pt,label=left:{\scriptsize$a$}] (a2) at (0,0){};
    \node[pt,label=left:{\scriptsize$b$}] (b2) at (0,0.8){};
    \node[zp,label=left:{\scriptsize$z$}] (z2) at (0,1.6){};
    \node[pt,label=above:{\scriptsize$c$}] (c2) at (0,2.4){};
    \node[pt,label=right:{\scriptsize$w$}] (w2) at (1.2,0){};
    \draw[gv] (a2)--(b2); \draw[gv] (b2)--(z2); \draw[gv] (z2)--(c2);
    \node[tg,align=center] at (0.55,-0.85){must contain all below $z$,\\ i.e.\ all of $D$:\ $c_{\sup}(D)$ of these};
  \end{scope}
\end{tikzpicture}
\caption{Counting the order ideals (antichains) of a child $Q+z$ by whether they contain the new point $z$
(an arrow $x\to y$ means $x<y$), shown for the child with $a<b<z<c$ and $w$ incomparable to the rest. Here
$D=\{a,b\}$ are the elements below $z$, $U=\{c\}$ those above, $w$ is incomparable, and
$J=Q\setminus U=\{a,b,w\}$ collects everything not above $z$. Shaded nodes form the chosen ideal. \textbf{(a)} An ideal omitting $z$ can hold
nothing above $z$ (that would drag $z$ in), so it sits inside $J$ and is simply an ideal of $J$: there are
$c_{\mathrm{sub}}(J)$ of them. \textbf{(b)} An ideal containing $z$ must contain all of $D$, and every ideal
of $Q$ containing $D$ arises this way: there are $c_{\sup}(D)$ of them. The two kinds are disjoint and
exhaustive, giving $\dd(Q+z)=c_{\mathrm{sub}}(J)+c_{\sup}(D)$ as in~\eqref{eq:childcount}.}
\label{fig:idealsplit}
\end{figure}

Recall that an insertion is specified by the pair $(J,D)$ of order ideals introduced above ($J$ the elements
\emph{not} above $z$, $D$ those \emph{below} it). To range over all insertions we let $J$ run over the order ideals of $Q$,
and for each fixed $J$ we let $D$ run over the order ideals contained in
\begin{equation}\label{eq:bstar}
B^\ast(J)\;:=\;J\cap\!\!\bigcap_{u\in Q\setminus J}\!{\downarrow}u,\qquad {\downarrow}u=\{x\in Q:x\le u\}
\end{equation}
In~\eqref{eq:bstar} the index $u$ runs over the elements of $U=Q\setminus J$ (those placed above $z$);
${\downarrow}u$ is the \emph{principal down-set} of $u$, the elements at or below it; and
the inner intersection $\bigcap_{u\in U}{\downarrow}u$ keeps the elements that lie below \emph{every} member
of $U$, which the outer $J\cap{}$ then restricts to $J$. So $B^\ast(J)$ is the part of $J$ sitting below
all of $U$ (Figure~\ref{fig:bstar}). Notice that the requirement $D\subseteq B^\ast(J)$
is exactly the admissibility condition: anything placed below $z$ must lie below everything placed above
$z$, or transitivity would force a new relation. For instance, in Figure~\ref{fig:bstar} the element $b$ lies in $J=\{a,b\}$ but not in $B^\ast(J)=\{a\}$,
because it sits below $p$ but not below $q$ and so cannot be placed below $z$. When $z$ is maximal,
$U=\varnothing$, the intersection is over no elements and equals $Q$, so $B^\ast(Q)=Q$ and $D$ may be any
order ideal.

\begin{figure}[tbp]
\centering
\begin{tikzpicture}[x=1cm,y=1cm,>=stealth,
   pt/.style={circle,fill=black,inner sep=1.7pt},
   uu/.style={circle,fill=green!60!black,draw=black,inner sep=2.2pt},
   zz/.style={circle,fill=orange!85!black,draw=black,inner sep=2.2pt},
   gv/.style={->,thick,shorten >=2pt,shorten <=2pt},
   halo/.style={circle,fill=blue!16,inner sep=4.5pt},
   ti/.style={font=\scriptsize\itshape},
   op/.style={font=\large},
   lbl/.style={font=\scriptsize\bfseries}]
  \node[lbl] at (-1.95,0.45){(1)};
  \begin{scope}
    \node[ti] at (-0.3,1.6){$J=\{a,b\}$};
    \node[halo] at (0,0){}; \node[halo] at (-1.0,0){};
    \node[pt,label=below:{\scriptsize$a$}] (a) at (0,0){};
    \node[pt,label=below:{\scriptsize$b$}] (b) at (-1.0,0){};
    \node[uu,label=above:{\scriptsize$p$}] (p) at (-0.45,0.9){};
    \node[uu,label=above:{\scriptsize$q$}] (q) at (0.45,0.9){};
    \draw[gv] (a)--(p); \draw[gv] (a)--(q); \draw[gv] (b)--(p);
  \end{scope}
  \node[op] at (0.97,0.45){$\cap$};
  \begin{scope}[shift={(2.5,0)}]
    \node[ti] at (-0.3,1.6){${\downarrow}p$};
    \node[halo] at (0,0){}; \node[halo] at (-1.0,0){}; \node[halo] at (-0.45,0.9){};
    \node[pt,label=below:{\scriptsize$a$}] (a2) at (0,0){};
    \node[pt,label=below:{\scriptsize$b$}] (b2) at (-1.0,0){};
    \node[uu,label=above:{\scriptsize$p$}] (p2) at (-0.45,0.9){};
    \node[uu,label=above:{\scriptsize$q$}] (q2) at (0.45,0.9){};
    \draw[gv] (a2)--(p2); \draw[gv] (a2)--(q2); \draw[gv] (b2)--(p2);
  \end{scope}
  \node[op] at (3.47,0.45){$\cap$};
  \begin{scope}[shift={(5.0,0)}]
    \node[ti] at (-0.3,1.6){${\downarrow}q$};
    \node[halo] at (0,0){}; \node[halo] at (0.45,0.9){};
    \node[pt,label=below:{\scriptsize$a$}] (a3) at (0,0){};
    \node[pt,label=below:{\scriptsize$b$}] (b3) at (-1.0,0){};
    \node[uu,label=above:{\scriptsize$p$}] (p3) at (-0.45,0.9){};
    \node[uu,label=above:{\scriptsize$q$}] (q3) at (0.45,0.9){};
    \draw[gv] (a3)--(p3); \draw[gv] (a3)--(q3); \draw[gv] (b3)--(p3);
  \end{scope}
  \node[op] at (5.97,0.45){$=$};
  \begin{scope}[shift={(7.5,0)}]
    \node[ti] at (-0.3,1.6){$B^\ast(J)=\{a\}$};
    \node[halo] at (0,0){};
    \node[pt,label=below:{\scriptsize$a$}] (a4) at (0,0){};
    \node[pt,label=below:{\scriptsize$b$}] (b4) at (-1.0,0){};
    \node[uu,label=above:{\scriptsize$p$}] (p4) at (-0.45,0.9){};
    \node[uu,label=above:{\scriptsize$q$}] (q4) at (0.45,0.9){};
    \draw[gv] (a4)--(p4); \draw[gv] (a4)--(q4); \draw[gv] (b4)--(p4);
  \end{scope}
  \draw[decorate,decoration={brace,amplitude=4pt,mirror},gray!65] (1.3,-0.6) -- (5.65,-0.6);
  \node[font=\scriptsize] at (3.47,-1.05){$\bigcap_{u\in U}{\downarrow}u$, \ $u=p,q$};
  \node[lbl,anchor=west] at (-1.95,-1.75){(2) the two insertions, $D\subseteq B^\ast(J)=\{a\}$:};
  \begin{scope}[shift={(0.6,-4.3)}]
    \node[pt,label=below:{\scriptsize$b$}] (xb) at (-1.35,0){};
    \node[pt,label=below:{\scriptsize$a$}] (xa) at (-0.4,0){};
    \node[zz,label=below:{\scriptsize$z$}] (xz) at (0.4,0){};
    \node[uu,label=above:{\scriptsize$p$}] (xp) at (-0.4,1.0){};
    \node[uu,label=above:{\scriptsize$q$}] (xq) at (0.4,1.0){};
    \draw[gv] (xb)--(xp); \draw[gv] (xa)--(xp); \draw[gv] (xa)--(xq);
    \draw[gv] (xz)--(xp); \draw[gv] (xz)--(xq);
    \node[ti] at (-0.3,-0.7){$D=\varnothing$};
  \end{scope}
  \begin{scope}[shift={(4.1,-4.3)}]
    \node[pt,label=below:{\scriptsize$a$}] (ya) at (0,0){};
    \node[zz,label=right:{\scriptsize$z$}] (yz) at (0,0.8){};
    \node[pt,label=left:{\scriptsize$b$}] (yb) at (-1.1,0.8){};
    \node[uu,label=above:{\scriptsize$p$}] (yp) at (-0.5,1.6){};
    \node[uu,label=above:{\scriptsize$q$}] (yq) at (0.5,1.6){};
    \draw[gv] (ya)--(yz); \draw[gv] (yz)--(yp); \draw[gv] (yz)--(yq); \draw[gv] (yb)--(yp);
    \node[ti] at (-0.2,-0.7){$D=\{a\}$};
  \end{scope}
\end{tikzpicture}
\caption{\textbf{(1)} Building $B^\ast(J)=J\cap\bigcap_{u\in U}{\downarrow}u$ of~\eqref{eq:bstar} on the
parent $Q=\{a,b,p,q\}$ with $a<p$, $a<q$, $b<p$ (arrows point from smaller to larger). With $p$ and $q$
(green) above $z$, $U=\{p,q\}$ and $J=\{a,b\}$. The big intersection runs over the two values $u=p$ and
$u=q$, giving the down-sets ${\downarrow}p=\{a,b,p\}$ and ${\downarrow}q=\{a,q\}$ (braced); these are not
nested, and intersecting them, then with $J$, leaves $B^\ast(J)=\{a\}$ (every element but $a$ is removed by
some set: $q\notin{\downarrow}p$, $b,p\notin{\downarrow}q$, $p,q\notin J$). \textbf{(2)} The admissible
insertions for this $U$ are the order ideals $D\subseteq B^\ast(J)=\{a\}$, namely $D=\varnothing$ and
$D=\{a\}$; their two children $Q+z$ (new point $z$ in orange) are shown. The would-be choice $D=\{b\}$ is
excluded because $b<z<q$ would force $b<q$, which is false.}
\label{fig:bstar}
\end{figure}

Computationally, $B^\ast(J)$ is not rebuilt for each ideal: adjacent
ideals in $\mathcal{J}(Q)$ differ by a single element $x$ of $Q$ (a \emph{covering arc} of the lattice),
across which the intersection in~\eqref{eq:bstar} changes by just the one principal down-set ${\downarrow}x$;
storing that one arc per ideal threads $B^\ast$ from each ideal to the next, with no full intersection ever
recomputed. Every parent is generated once by
canonical-construction-path generation~\cite{BrinkmannMcKay} with isomorphism handling by
\texttt{nauty}~\cite{nauty}, which also supplies $|\Aut Q|$. The contributions of all insertions to
$G(16,k)$ are accumulated directly from $Q$'s ideal lattice; no $16$-point poset is generated or
canonicalized, and the roughly $8.3\times10^{28}$ labeled posets are never enumerated.

\paragraph{Example.} We trace the entire per-parent computation on the smallest possible parent, a single point, advancing one diagonal from it. Take
the one-point parent $Q=\{a\}$, whose ideal lattice is the two-element chain
$\mathcal{J}(Q)=\{\varnothing,\{a\}\}$, so $\dd(Q)=2$. It admits three insertions of a new element $z$, listed below
by their $(J,D)$ data; here $B^\ast(J)=J$, so $D$ runs over the ideals contained in $J$, and the child
count $\dd=c_{\mathrm{sub}}(J)+c_{\sup}(D)$ is read off~\eqref{eq:childcount}.
\begin{center}
\begin{tabular}{llccc}
\toprule
insertion $(J,D)$ & child & $c_{\mathrm{sub}}(J)$ & $c_{\sup}(D)$ & $\dd$\\
\midrule
$(\{a\},\varnothing)$ & $a,z$ incomparable & $2$ & $2$ & $4$\\
$(\{a\},\{a\})$ & $a<z$ & $2$ & $1$ & $3$\\
$(\varnothing,\varnothing)$ & $z<a$ & $1$ & $2$ & $3$\\
\bottomrule
\end{tabular}
\end{center}
The single parent has weight $w=1!/|\Aut Q|=1$, so its contribution to $G(2,k)$ is $4^k+3^k+3^k$: at
$k=0$ this is $3=P(2)$, the three labeled $2$-point posets; at $k=1$ it is $G(2,1)=10$; at $k=2$ it is
$G(2,2)=34$. The inner double loop of Algorithm~\ref{alg:kernel} forms exactly this insertion sum, but
evaluates the sum over $D$ by the binomial collapse~\eqref{eq:collapse} rather than by listing insertions,
which is what holds the cost to a single sweep of the parent's ideal lattice when $\dd$ is large.

\subsection{A single-pass moment-transfer kernel}\label{sec:kernel}
The kernel evaluates the entire $(m{+}1)$-point moment from the $m$-point sweep in one pass per parent.
Rather than recording each parent's own antichain count and advancing the diagonal through the analytic
reduction~\cite{ErneStege,HeitzigReinhold} or instrumenting generation one level higher~\cite{BrinkmannMcKay},
it reads the moment of the parent's one-point insertions directly off the parent's ideal lattice.

The \emph{zeta transform} of a function $f$ on a finite poset comes in two directions, both used below.
The \emph{down-zeta} $\textsc{DownZeta}$ (written $\zeta$) assigns to each element $x$ the cumulative sum
$(\zeta f)(x)=\sum_{y\le x}f(y)$ over everything at or below $x$; the \emph{up-zeta} $\textsc{UpZeta}$ assigns
the cumulative sum $\sum_{y\ge x}f(y)$ over everything at or above $x$ (each is inverted by the corresponding
M\"obius transform). On the ideal lattice $\mathcal{J}(Q)$, ordered by inclusion, ``at or below $x$'' means the ideals
contained in $x$ and ``at or above $x$'' the ideals containing it, so $\textsc{DownZeta}$ sums $f$ over all
sub-ideals and $\textsc{UpZeta}$ over all super-ideals (Figure~\ref{fig:idealzeta}). Either is computed by a single sweep along a
\emph{linear extension} of $\mathcal{J}(Q)$ (a listing of all the ideals in some order refining inclusion, so each ideal
comes after every ideal it contains): sweeping along such a list in the matching direction, each step already
has the partial sums it needs, which is what makes the transform fast.

\begin{proposition}\label{prop:kernel}
Let $Q$ be an $m$-point poset with ideal lattice $\mathcal{J}(Q)$, and write $\dd=|\mathcal{J}(Q)|$ for its number of order
ideals (equivalently its antichain count). For each $k$, let $S_k(Q)$ be the sum of the child antichain
counts $\dd(Q+z)^k$ over all admissible one-point insertions $z$ into $Q$ (Section~\ref{sec:harvest}):
\[
   S_k(Q)\;=\;\sum_{z}\dd(Q+z)^k
\]
Then $S_0(Q),\dots,S_K(Q)$ can be computed in $O(\dd\,m\,K)$ arithmetic
operations and $O(\dd)$ words of working memory, using only zeta transforms on $\mathcal{J}(Q)$ and without
enumerating the insertions $z$ individually, generating, canonicalizing, or storing any $(m{+}1)$-point
poset.
\end{proposition}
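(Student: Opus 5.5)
The plan is to reduce $S_k(Q)$ to a double sum over pairs of order ideals. Lemma~\ref{lem:childcount} supplies the summand, and the binomial theorem separates the two variables. The admissible insertions are in bijection with the pairs $(J,D)$, where $J\in\mathcal{J}(Q)$ and $D\in\mathcal{J}(Q)$ with $D\subseteq B^\ast(J)$ as in~\eqref{eq:bstar}. So
\[
S_k(Q)=\sum_{J\in\mathcal{J}(Q)}\ \sum_{\substack{D\in\mathcal{J}(Q)\\ D\subseteq B^\ast(J)}}\bigl(a(J)+b(D)\bigr)^k,
\qquad a=\textsc{DownZeta}(\mathbf 1),\quad b=\textsc{UpZeta}(\mathbf 1),
\]
since $c_{\mathrm{sub}}(J)=\sum_{I\subseteq J}1$ and $c_{\sup}(D)=\sum_{I\supseteq D}1$ are exactly the two zeta transforms of the constant function on $\mathcal{J}(Q)$. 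Expanding the binomial gives
\[
S_k(Q)=\sum_{j=0}^{k}\binom{k}{j}\sum_{J}a(J)^{k-j}\,F_j\bigl(B^\ast(J)\bigr),
\qquad F_j:=\textsc{DownZeta}(b^{\,j}).
\]
This step is valid because $B^\ast(J)$ is itself an order ideal, being an intersection of the ideals $J$ and ${\downarrow}u$. Hence the set of ideals $D\subseteq B^\ast(J)$ is precisely the down-set of the lattice element $B^\ast(J)$, and the inner sum over $D$ is the down-zeta of $b^j$ evaluated there. This is the ``binomial collapse'' referred to in the paper, and it removes any enumeration of $D$.

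The computation then proceeds in four steps.
\begin{enumerate}
\item Enumerate $\mathcal{J}(Q)$ in a linear extension, storing each ideal's index and one covering arc. Standard ideal-lattice enumeration does this in $O(\dd\,m)$ time.
\item Compute $a$ and $b$ by one down-sweep and one up-sweep, and then $F_0,\dots,F_K$ by $K{+}1$ down-zeta transforms. To do each transform in $O(\dd\,m)$, I would use the distributive-lattice trick: perform one pass per element $x\in Q$, adding $f(I\setminus\{x\})$ into $f(I)$ whenever $x$ is maximal in $I$. This is the poset analogue of the subset-sum transform, and it is correct because every ideal $I'\subseteq I$ is reached along a unique chain of removals taken in a fixed order of the elements.
\item For each $J$, find the lattice index of $B^\ast(J)$.
\item Accumulate $\sum_J a(J)^{k-j}F_j(B^\ast(J))$ for all $0\le j\le k\le K$, in $O(\dd K^2)$ with running powers. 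Alternatively, restrict to the $O(\dd K)$ quantities that are actually needed by accumulating $\sum_J a(J)^i F_j(B^\ast(J))$ for each fixed $j$.
\end{enumerate}

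Memory is kept at $O(\dd)$ words by processing one $j$ at a time: compute $F_j$, add its contributions for all $k$, then discard it. Only $a$, $b^j$, $F_j$, the $B^\ast$ index table and the covering arcs are live at any moment, and each of these has length $\dd$.

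The main obstacle is step (3), computing $B^\ast(J)$ and its index within the budget. I would thread it along the covering arcs, processing ideals in decreasing order, i.e.\ removing elements from $J=Q$, where $B^\ast(Q)=Q$. Suppose $J$ is obtained from $J'=J\cup\{x\}$ with $x$ maximal in $J'$. Then $U=U'\cup\{x\}$, so
\[
B^\ast(J)=B^\ast(J')\cap{\downarrow}x\setminus\{x\}.
\]
This is one bitmask AND, at $O(1)$ word operations for $m\le 64$. The result must be converted to a lattice index. For that I would use a ranking of ideals computed during the enumeration, either a perfect hash or a trie walk of depth $m$, costing $O(m)$ per lookup and hence $O(\dd\,m)$ in total. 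This index lookup, together with verifying that the per-element zeta passes really cost only $O(\dd\,m)$ rather than $O(\dd^2)$, is where I expect the care to be needed. With both in place, the total cost is $O(\dd\,m\,K)$ as claimed, and no child $Q+z$ is ever formed.
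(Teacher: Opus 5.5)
Your proposal is correct and follows the paper's proof essentially step for step: the insertion identity, the binomial collapse onto $\textsc{DownZeta}(c_{\sup}^{\,j})$ evaluated at $B^\ast(J)$, fast zeta passes on $\mathcal{J}(Q)$, and $B^\ast$ threaded along one stored covering arc per ideal, while making explicit what the paper leaves implicit, notably that $B^\ast(J)$ is itself an ideal so each inner sum over $D$ is a single down-zeta value. Two details still need pinning down: your per-element zeta passes must take the elements of $Q$ along a linear extension, minimal elements first, because your ``unique chain'' argument needs every intermediate set to be an ideal, and processing the chain $a<b$ top-first leaves $f(\varnothing)$ out of the final value at $\{a,b\}$; and the lookup of $I\setminus\{x\}$ inside those passes must cost $O(1)$, e.g.\ via your perfect hash, since a stored $\dd\times m$ cover table would break the $O(\dd)$ memory bound and an $O(m)$ trie walk would give $O(\dd\,m^2)$ per transform.
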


\noindent The full moments are then recovered by the weighting of Section~\ref{sec:harvest},
\[
G(m{+}1,k)=\sum_Q \frac{m!}{|\Aut Q|}\,S_k(Q)
\]
summed over the unlabeled parents $Q$, so the proposition bounds the cost of the
whole sweep. The point is not that the $K{+}1$ power-sums share their inputs (they do, trivially), but that those inputs
are never formed. The number of admissible insertions $z$ can grow as $\Theta(\dd^2)$ (it
is $\dd(\dd{+}1)/2$ already for a chain), so listing them and summing their child counts $\dd(Q+z)^k$ is an
$O(\dd^2)$ computation. The kernel instead obtains every $S_k$ in $O(\dd\,m)$, never forming a single
$\dd(Q+z)$: the binomial collapse~\eqref{eq:collapse} below replaces the inner sum over the (up to $\dd$)
insertions sharing each ideal $J$ by one value read from a zeta transform, leaving a sum over the $\dd$
ideals rather than the $\sim\dd^2$ insertions (Figure~\ref{fig:collapse}).

\begin{proof}
The argument rests on three facts.

\begin{figure}[tbp]
\centering
\begin{tikzpicture}[x=1cm,y=1cm,>=stealth,
   pt/.style={circle,fill=black,inner sep=1.7pt},
   nd/.style={circle,fill=black,inner sep=1.6pt},
   gv/.style={->,thick,shorten >=2pt,shorten <=2pt},
   le/.style={semithick,gray!58,->,shorten >=1.5pt,shorten <=1.5pt},
   halo/.style={circle,fill=blue!16,inner sep=3.8pt},
   halou/.style={circle,fill=green!24,inner sep=3.8pt},
   ti/.style={font=\scriptsize\itshape},
   lbl/.style={font=\scriptsize\bfseries},
   sub/.style={font=\scriptsize},
   num/.style={font=\tiny,gray!70},
   tg/.style={font=\scriptsize,blue!45!black},
   tgu/.style={font=\scriptsize,green!45!black}]
  \node[lbl,anchor=west] at (-0.55,3.25){(1)};
  \node[ti,anchor=west] at (-0.05,3.25){$Q$ and its ideal lattice $\mathcal{J}(Q)$};
  \node[pt,label=below:{\scriptsize$a$}] (qa) at (0.0,0.55){};
  \node[pt,label=below:{\scriptsize$b$}] (qb) at (0.8,0.55){};
  \node[pt,label=above:{\scriptsize$c$}] (qc) at (0.4,1.5){};
  \draw[gv] (qa)--(qc); \draw[gv] (qb)--(qc);
  \node[ti] at (0.4,-0.35){$Q$};
  \node[sub] at (1.2,1.0){$\rightsquigarrow$};
  \begin{scope}[shift={(2.95,0)}]
    \node[nd,label={below:\scriptsize$1{:}\varnothing$}]  (e)   at (0,0){};
    \node[nd,label={left:\scriptsize$2{:}\{a\}$}]         (A)   at (-0.65,0.85){};
    \node[nd,label={right:\scriptsize$3{:}\{b\}$}]        (B)   at (0.65,0.85){};
    \node[nd,label={right:\scriptsize$4{:}\{a,b\}$}]      (AB)  at (0,1.7){};
    \node[nd,label={above:\scriptsize$5{:}\{a,b,c\}$}]    (ABC) at (0,2.55){};
    \draw[le](e)--(A); \draw[le](e)--(B); \draw[le](A)--(AB); \draw[le](B)--(AB); \draw[le](AB)--(ABC);
    \node[ti,align=center] at (0,-0.9){$1$--$5$: a linear extension};
  \end{scope}
  \begin{scope}[shift={(7.0,0)}]
    \node[lbl,anchor=west] at (-1.0,3.25){(2)};
    \node[ti,anchor=west] at (-0.55,3.25){$c_{\mathrm{sub}}=\textnormal{\textsc{DownZeta}}(\mathbf 1)$};
    \node[halo] at (0,0){}; \node[halo] at (-0.65,0.85){}; \node[halo] at (0.65,0.85){}; \node[halo] at (0,1.7){};
    \node[nd,label={below:\scriptsize$\varnothing$}] (e2) at (0,0){};
    \node[nd,label={left:\scriptsize$\{a\}$}]        (A2) at (-0.65,0.85){};
    \node[nd,label={right:\scriptsize$\{b\}$}]       (B2) at (0.65,0.85){};
    \node[nd,label={right:\scriptsize$J{=}\{a,b\}$}] (AB2) at (0,1.7){};
    \node[nd,label={above:\scriptsize$\{a,b,c\}$}]   (ABC2) at (0,2.55){};
    \draw[le](e2)--(A2); \draw[le](e2)--(B2); \draw[le](A2)--(AB2); \draw[le](B2)--(AB2); \draw[le](AB2)--(ABC2);
    \node[tg,align=center] at (0.1,-0.8){the $4$ shaded ideals,\\ $c_{\mathrm{sub}}(\{a,b\})=4$};
  \end{scope}
  \begin{scope}[shift={(11.0,0)}]
    \node[lbl,anchor=west] at (-1.0,3.25){(3)};
    \node[ti,anchor=west] at (-0.55,3.25){$c_{\sup}=\textnormal{\textsc{UpZeta}}(\mathbf 1)$};
    \node[halou] at (-0.65,0.85){}; \node[halou] at (0,1.7){}; \node[halou] at (0,2.55){};
    \node[nd,label={below:\scriptsize$\varnothing$}] (e3) at (0,0){};
    \node[nd,label={left:\scriptsize$D{=}\{a\}$}]    (A3) at (-0.65,0.85){};
    \node[nd,label={right:\scriptsize$\{b\}$}]       (B3) at (0.65,0.85){};
    \node[nd,label={right:\scriptsize$\{a,b\}$}]     (AB3) at (0,1.7){};
    \node[nd,label={above:\scriptsize$\{a,b,c\}$}]   (ABC3) at (0,2.55){};
    \draw[le](e3)--(A3); \draw[le](e3)--(B3); \draw[le](A3)--(AB3); \draw[le](B3)--(AB3); \draw[le](AB3)--(ABC3);
    \node[tgu,align=center] at (0.1,-0.8){the $3$ shaded ideals,\\ $c_{\sup}(\{a\})=3$};
  \end{scope}
\end{tikzpicture}
\caption{The ideal lattice and the two zeta sweeps, for the three-point parent $Q$ with $a<c$ and $b<c$
(an arrow $x\to y$ means $x<y$). \textbf{(1)} The order ideals (down-sets) of $Q$ are
$\varnothing,\,\{a\},\,\{b\},\,\{a,b\},\,\{a,b,c\}$, each a subset of $Q$; ordered by inclusion they form the lattice $\mathcal{J}(Q)$, whose
arrows denote containment ($\subseteq$) and run from each ideal up to those that contain it. Its $\dd=5$ nodes
are processed in a \emph{linear extension} (the labels $1{:}\,\cdot$ through $5{:}\,\cdot$, each ideal listed
after every ideal it contains). Panels (2) and (3) redraw this same lattice; in them $J$ and $D$ are single
ideals, individual nodes of $\mathcal{J}(Q)$. \textbf{(2)}
$c_{\mathrm{sub}}(J)=\textsc{DownZeta}(\mathbf 1)(J)$ is the number of ideals contained in $J$: with $J=\{a,b\}$
these are the four shaded nodes $\varnothing,\{a\},\{b\},\{a,b\}$ (the down-cone of $J$), so $c_{\mathrm{sub}}(\{a,b\})=4$.
\textbf{(3)} $c_{\sup}(D)=\textsc{UpZeta}(\mathbf 1)(D)$ is the number of ideals containing $D$: with $D=\{a\}$
these are the three shaded nodes $\{a\},\{a,b\},\{a,b,c\}$ (the up-cone of $D$), so $c_{\sup}(\{a\})=3$. Each of $c_{\mathrm{sub}}$, $c_{\sup}$, and the higher transforms $M_j$ is filled in for
\emph{all} ideals at once by a single pass along the linear extension; this is what lets the kernel read a
parent's entire moment off $\mathcal{J}(Q)$ in one sweep.}
\label{fig:idealzeta}
\end{figure}

\emph{(i) Insertion is governed by the ideal lattice.} As in Section~\ref{sec:harvest}, an insertion is a
pair of ideals $(J,D)$, with $J=Q\setminus U$ encoding the up-set and $D$ the down-set, and the child's
antichain count is given by the insertion identity~\eqref{eq:childcount}, $\dd=c_{\mathrm{sub}}(J)+c_{\sup}(D)$,
where $c_{\mathrm{sub}}(J)$ and $c_{\sup}(D)$ count the order ideals contained in $J$ and containing $D$ in the
distributive ideal lattice $\mathcal{J}(Q)$~\cite{Birkhoff,Stanley}; equivalently
$c_{\mathrm{sub}}=\textsc{DownZeta}(\mathbf 1)$ and $c_{\sup}=\textsc{UpZeta}(\mathbf 1)$ are the down- and
up-cumulative sums of the constant function $\mathbf 1$ (summing $\mathbf 1$ over a set counts it). Hence
the contribution of $Q$'s insertions to $G(m{+}1,k)$ is a sum over such pairs,
$\sum_{J}\sum_{D}\bigl(c_{\mathrm{sub}}(J)+c_{\sup}(D)\bigr)^k$.

\emph{(ii) The binomial theorem decouples the insertion variable.} In the inner sum the first summand
$c_{\mathrm{sub}}(J)$ is fixed: it depends on the ideal $J$ alone, not on the insertion $D$, while only
$c_{\sup}(D)$ varies with $D$. Expanding $\bigl(c_{\mathrm{sub}}(J)+c_{\sup}(D)\bigr)^k$ by the binomial theorem
therefore separates each term into a $J$-only factor $\binom{k}{j}\,c_{\mathrm{sub}}(J)^{k-j}$, which pulls
outside the sum over $D$, and a $D$-only factor $c_{\sup}(D)^j$, which stays inside; summing the latter over
the admissible $D$ leaves exactly the moment $M_j$. The $\dd$-fold insertion-sum thus collapses to a fixed
combination of moments of $c_{\sup}$,
\begin{equation}\label{eq:collapse}
\sum_{D\subseteq B^\ast(J)}\bigl(c_{\mathrm{sub}}(J)+c_{\sup}(D)\bigr)^k
=\sum_{j=0}^{k}\binom{k}{j}\,c_{\mathrm{sub}}(J)^{\,k-j}\,M_j\!\bigl(B^\ast(J)\bigr)
\end{equation}
where $B^\ast(J)$ is the admissible region~\eqref{eq:bstar} (computed from one stored covering arc per
ideal) and $M_j$ is the $j$-th \emph{moment} of $c_{\sup}$ over the sub-ideals,
\[
M_j(X)\;=\;\textsc{DownZeta}\!\bigl(c_{\sup}^{\,j}\bigr)(X)\;=\;\sum_{D\subseteq X}c_{\sup}(D)^{\,j}.
\]
The argument of $\textsc{DownZeta}$ is a \emph{function} on the lattice, and $c_{\sup}^{\,j}$ is the $j$-th
\emph{pointwise} power of $c_{\sup}$ (its value at each ideal raised to the $j$). These moments therefore run
from the plain count $M_0=\textsc{DownZeta}(\mathbf 1)=c_{\mathrm{sub}}$ (the $0$th power $c_{\sup}^{\,0}$ is
the constant function $\mathbf 1$), through $M_1=\textsc{DownZeta}(c_{\sup})$ and
$M_2=\textsc{DownZeta}(c_{\sup}^{\,2})$, up to $M_K$.

\begin{figure}[tbp]
\centering
\begin{tikzpicture}[x=1cm,y=1cm,>=stealth,
   din/.style={circle,fill=gray!60,inner sep=1.3pt},
   dout/.style={circle,fill=blue!55!black,inner sep=1.9pt},
   ti/.style={font=\scriptsize\itshape},
   sub/.style={font=\scriptsize},
   note/.style={font=\scriptsize\itshape,text=gray!30!black},
   ar/.style={->,thick,gray!45!black,shorten >=1.5pt,shorten <=1.5pt}]
  \foreach \r/\len in {0/5,1/4,2/3,3/2,4/1}{%
    \foreach \c in {1,...,\len}{\node[din] at (0.78+\c*0.36,2.0-\r*0.5){};}}
  \draw[decorate,decoration={brace,amplitude=4pt},gray!65] (0.7,-0.12)--(0.7,2.12);
  \node[sub,rotate=90,anchor=south] at (0.34,1.0){$\dd$ ideals $J$};
  \node[sub,anchor=north west,align=left] at (-0.15,-0.3){each row is one ideal $J$; each \textcolor{gray!60}{$\bullet$}, one of its insertions $(J,D)$, $D\subseteq B^\ast(J)$\\ (${\approx}\tfrac12\dd^2$ in all)};
  \foreach \r in {0,1,2,3,4}{\draw[ar] (3.05,2.0-\r*0.5)--(4.35,2.0-\r*0.5);}
  \node[note,align=center] at (3.7,2.42){binomial\\ theorem};
  \foreach \r in {0,1,2,3,4}{\node[dout] at (4.65,2.0-\r*0.5){};}
  \draw[decorate,decoration={brace,amplitude=4pt},gray!65] (4.9,2.12)--(4.9,-0.12);
  \node[sub,anchor=west,align=left] at (5.2,1.0){one value per ideal $J$\\ ($\dd$ in all), read off\\ the moments $M_j$};
  \draw[gray!45] (-0.2,-1.15)--(8.7,-1.15);
  \node[sub,anchor=west] at (-0.15,-1.75){$\displaystyle\sum_{D\subseteq B^\ast(J)}\!\bigl(c_{\mathrm{sub}}(J)+c_{\sup}(D)\bigr)^k\;=\;\sum_{j=0}^{k}\binom{k}{j}\,c_{\mathrm{sub}}(J)^{k-j}\,M_j\!\bigl(B^\ast(J)\bigr)$};
\end{tikzpicture}
\caption{The binomial collapse, the step that makes the kernel $O(\dd)$ per parent. For a fixed ideal $J$,
the inner sum over the admissible insertions $D\subseteq B^\ast(J)$ (about $\dd$ of them) would cost $O(\dd)$
term by term; expanding the $k$-th power by the binomial theorem~\eqref{eq:collapse} replaces it with a fixed
combination of the moments $M_j(B^\ast(J))$, each a single value read off the precomputed zeta transforms of
Figure~\ref{fig:idealzeta}. Summed over the $\dd$ ideals $J$ this yields $S_k(Q)$, so the
$\approx\tfrac12\dd^2$ admissible insertions are evaluated in $O(\dd)$ arithmetic rather than $O(\dd^2)$, and
no child $Q+z$ is ever formed.}
\label{fig:collapse}
\end{figure}

\emph{(iii) Each transform is one fast-zeta pass.} On the distributive lattice $\mathcal{J}(Q)$, whose
join-irreducibles are exactly the $|Q|$ principal ideals, both $c_{\sup}$ and every $M_j$ are computable
in $O(\dd\cdot n)$ by the fast-zeta circuits of Bj\"orklund \emph{et al.}~\cite{BHKKNP} (one sweep along a linear
extension; Figure~\ref{fig:sweep}).

\begin{figure}[tbp]
\centering
\begin{tikzpicture}[x=1cm,y=1cm,>=stealth,
   fn/.style={circle,fill=black,inner sep=1.7pt},
   pn/.style={circle,draw=gray!60,fill=white,inner sep=1.6pt,line width=0.5pt},
   le/.style={semithick,gray!55,->,shorten >=1.3pt,shorten <=1.3pt},
   lep/.style={semithick,gray!25,->,shorten >=1.3pt,shorten <=1.3pt},
   val/.style={font=\scriptsize,blue!50!black},
   ti/.style={font=\scriptsize\itshape},
   wf/.style={dashed,red!55!black,line width=0.6pt}]
  \node[ti,text=gray!30!black] at (2.6,2.6){the sweep fills $c_{\mathrm{sub}}$ bottom-up, one ideal at a time};
  \begin{scope}
    \draw[lep](0,0)--(-0.5,0.7); \draw[lep](0,0)--(0.5,0.7); \draw[lep](-0.5,0.7)--(0,1.4);
    \draw[lep](0.5,0.7)--(0,1.4); \draw[lep](0,1.4)--(0,2.1);
    \node[fn] (e) at (0,0){}; \node[val,below=-1pt] at (e.south){$1$};
    \node[pn] at (-0.5,0.7){}; \node[pn] at (0.5,0.7){}; \node[pn] at (0,1.4){}; \node[pn] at (0,2.1){};
    \draw[wf] (-0.85,0.35)--(0.85,0.35);
    \node[ti,anchor=north] at (0,-0.5){$1$ of $5$};
  \end{scope}
  \node at (1.55,1.0){$\Rightarrow$};
  \begin{scope}[shift={(3.1,0)}]
    \draw[le](0,0)--(-0.5,0.7); \draw[le](0,0)--(0.5,0.7); \draw[lep](-0.5,0.7)--(0,1.4);
    \draw[lep](0.5,0.7)--(0,1.4); \draw[lep](0,1.4)--(0,2.1);
    \node[fn] (e2) at (0,0){}; \node[val,below=-1pt] at (e2.south){$1$};
    \node[fn] (a2) at (-0.5,0.7){}; \node[val,left=-1pt] at (a2.west){$2$};
    \node[fn] (b2) at (0.5,0.7){}; \node[val,right=-1pt] at (b2.east){$2$};
    \node[pn] at (0,1.4){}; \node[pn] at (0,2.1){};
    \draw[wf] (-0.85,1.05)--(0.85,1.05);
    \node[ti,anchor=north] at (0,-0.5){$3$ of $5$};
  \end{scope}
  \node at (4.65,1.0){$\Rightarrow$};
  \begin{scope}[shift={(6.2,0)}]
    \draw[le](0,0)--(-0.5,0.7); \draw[le](0,0)--(0.5,0.7); \draw[le](-0.5,0.7)--(0,1.4);
    \draw[le](0.5,0.7)--(0,1.4); \draw[le](0,1.4)--(0,2.1);
    \node[fn] (e3) at (0,0){}; \node[val,below=-1pt] at (e3.south){$1$};
    \node[fn] (a3) at (-0.5,0.7){}; \node[val,left=-1pt] at (a3.west){$2$};
    \node[fn] (b3) at (0.5,0.7){}; \node[val,right=-1pt] at (b3.east){$2$};
    \node[fn] (ab3) at (0,1.4){}; \node[val,right=-1pt] at (ab3.east){$4$};
    \node[fn] (abc3) at (0,2.1){}; \node[val,above=-1pt] at (abc3.north){$5$};
    \node[ti,anchor=north] at (0,-0.5){$5$ of $5$ (done)};
  \end{scope}
\end{tikzpicture}
\caption{The fast zeta as a single sweep, shown in three stages for $c_{\mathrm{sub}}=\textsc{DownZeta}(\mathbf
1)$ on the lattice $\mathcal{J}(Q)$ of Figure~\ref{fig:idealzeta}: a filled node already carries its value
$c_{\mathrm{sub}}$ (blue, the number of ideals contained in it), a hollow node is not yet computed, and the
dashed line marks the sweep frontier. Sweeping the ideals bottom-up in a linear extension, every sub-ideal that a
node depends on has already been visited, so all $\dd$ values are produced in one organized pass of cost
$O(\dd\,n)$ ($n=|Q|$ join-irreducibles, the fast-zeta transform of Bj\"orklund \emph{et al.}~\cite{BHKKNP}), rather than
the $O(\dd^2)$ of counting each node's sub-ideals separately. The higher moments $M_1,\dots,M_K$ and $c_{\sup}$
are each filled by the same kind of single pass.}
\label{fig:sweep}
\end{figure}

Composing these proves Proposition~\ref{prop:kernel}: the $K{+}1$ transforms cost $O(|\mathcal{J}(Q)|\,m\,K)$ in
total, and the combine step adds $O(|\mathcal{J}(Q)|\,K)$ per moment, so for fixed $K$ the per-parent contribution
costs $O(|\mathcal{J}(Q)|\,m)$, the same order as computing the parent's own antichain count rather than evaluating
each insertion separately.
\end{proof}

Advancing one diagonal is thus, per parent, free up to the zeta transform. Over the full sweep this gave
a $2.40\times$ speedup over an $O(\dd^2)$ reference, the gain concentrated on the heavy parents whose ideal
lattices are largest. This kernel is the methodological contribution of the present work. It transfers the entire
$(m{+}1)$-point moment off each $m$-point parent's ideal lattice in a single pass, by combining the
insertion identity~\eqref{eq:childcount} with the binomial collapse~\eqref{eq:collapse} and one zeta
transform over $\mathcal{J}(Q)$, generating and canonicalizing no $(m{+}1)$-point poset. To our knowledge this
single-pass moment transfer does not appear in the earlier enumeration pipelines, which record a parent's
own antichain count and advance the diagonal analytically~\cite{ErneStege,HeitzigReinhold,BrinkmannMcKay}
rather than reading its children's moment directly off the parent.

Algorithm~\ref{alg:kernel} states the procedure, and Figure~\ref{fig:pipeline} its dataflow. The inner double loop is the binomial
collapse~\eqref{eq:collapse}; everything else is a constant number of zeta passes over $\mathcal{J}(Q)$.

\begin{figure}[t]
\centering
\begin{tikzpicture}[x=1cm,y=1cm,>=stealth,
   bx/.style={draw=black!55,rounded corners=2pt,fill=blue!4,inner sep=3pt,align=center,
              font=\scriptsize,minimum width=4.4cm,minimum height=0.52cm},
   op/.style={font=\scriptsize\itshape,anchor=east},
   fg/.style={font=\scriptsize,gray!40!black,anchor=west},
   ar/.style={->,thick,shorten >=1pt,shorten <=1pt}]
  \node[bx] (b1) at (0,3.5){parent $Q$};
  \node[bx] (b2) at (0,2.5){ideal lattice $\mathcal{J}(Q)$: the $\dd$ order ideals};
  \node[bx] (b3) at (0,1.5){$c_{\sup}$ at every ideal};
  \node[bx] (b4) at (0,0.5){$c_{\mathrm{sub}}{=}M_0,\ M_1,\ \dots,\ M_K$};
  \node[bx] (b5) at (0,-0.5){per-parent moments $S_0,\ \dots,\ S_K$};
  \node[bx] (b6) at (0,-1.5){$w\,S_0,\ \dots,\ w\,S_K$};
  \node[bx] (b7) at (0,-2.5){accumulators $\mathrm{acc}_0,\dots,\mathrm{acc}_K$ \ (4 residues each)};
  \node[bx,fill=blue!9] (b8) at (0,-3.5){$G(m{+}1,0\dots K)$ \ (one diagonal of moments)};
  \draw[ar](b1)--(b2); \draw[ar](b2)--(b3); \draw[ar](b3)--(b4); \draw[ar](b4)--(b5);
  \draw[ar](b5)--(b6); \draw[ar](b6)--(b7); \draw[ar](b7)--(b8);
  \node[op] at (-0.25,3.0){generate, iso-free};
  \node[op] at (-0.25,2.0){$\textnormal{\textsc{UpZeta}}(\mathbf 1)$};
  \node[op] at (-0.25,1.0){$\textnormal{\textsc{DownZeta}}(c_{\sup}^{\,j})$};
  \node[op] at (-0.25,0.0){binomial combine};
  \node[op] at (-0.25,-1.0){weight $\times\,w{=}m!/|\Aut Q|$};
  \node[op] at (-0.25,-2.0){add, $\bmod\ p_i$ (each $i$)};
  \node[op] at (-0.25,-3.0){CRT, 4 primes};
  \node[fg] at (2.45,3.5){one of ${\approx}6.8{\times}10^{13}$};
  \node[fg] at (2.45,2.5){Fig.~\ref{fig:idealzeta}(1)};
  \node[fg] at (2.45,1.5){Fig.~\ref{fig:idealzeta}(3)};
  \node[fg] at (2.45,0.5){Fig.~\ref{fig:idealzeta}(2)};
  \node[fg] at (2.45,-0.5){Fig.~\ref{fig:collapse}};
  \node[fg] at (2.45,-2.0){over all parents};
\end{tikzpicture}
\caption{The kernel as a pipeline, one column per parent. For each unlabeled parent $Q$ (generated once,
isomorphism-free), the ideal lattice $\mathcal{J}(Q)$ is built and swept: one \textsc{UpZeta} pass gives
$c_{\sup}$, then $K{+}1$ \textsc{DownZeta} passes give $M_0{=}c_{\mathrm{sub}},\dots,M_K$
(Figure~\ref{fig:idealzeta}); the binomial combine (Figure~\ref{fig:collapse}) turns these into the
per-parent moments $S_0,\dots,S_K$. Each is weighted by $w=m!/|\Aut Q|$ and added to the running
accumulators modulo four $61$-bit primes; after all ${\approx}6.8\times10^{13}$ parents, the Chinese
Remainder Theorem reconstructs the exact moments $G(m{+}1,0\dots K)$. No $(m{+}1)$-point poset is ever
formed.}
\label{fig:pipeline}
\end{figure}

\begin{algorithm}[t]
\caption{Moment harvest of $G(m{+}1,0\ldots K)$ from the unlabeled $m$-point sweep.}
\label{alg:kernel}
\vspace{2pt}\hrule\vspace{5pt}
\newcommand{\cmt}[1]{\hfill{\small$\triangleright$ #1}}
\noindent
$\mathrm{acc}_k \gets 0$ \ for $k=0,\dots,K$\\[1pt]
\textbf{for} each unlabeled poset $Q$ on $[m]$ (generated once)\,\textbf{:}\\
\hspace*{1.5em}$L \gets$ the order ideals of $Q$, listed along a linear extension \cmt{the lattice $\mathcal{J}(Q)$; $|L|=\dd$}\\
\hspace*{1.5em}$w \gets m!\,/\,|\Aut Q|$ \cmt{automorphism-weighted multiplicity of $Q$}\\
\hspace*{1.5em}\textbf{for} each $J\in L$\,\textbf{:}\quad $B^\ast(J) \gets$ admissible region of $J$ \cmt{Eq.~\eqref{eq:bstar}, one covering arc per ideal}\\
\hspace*{1.5em}$c_{\sup} \gets \textsc{UpZeta}(\mathbf 1)$ on $L$ \cmt{$c_{\sup}(D)=\#\{I\in L: I\supseteq D\}$, the ideals above $D$}\\
\hspace*{1.5em}\textbf{for} $j=0,\dots,K$\,\textbf{:}\quad $M_j \gets \textsc{DownZeta}\!\bigl(c_{\sup}^{\,j}\bigr)$ on $L$ \cmt{$M_j(X)=\sum_{D\subseteq X}c_{\sup}(D)^{\,j}$; each $O(\dd\,n)$}\\
\hspace*{1.5em}$c_{\mathrm{sub}} \gets M_0$ \cmt{$c_{\mathrm{sub}}(J)=\#\{I\in L: I\subseteq J\}$, the ideals below $J$}\\
\hspace*{1.5em}\textbf{for} $k=0,\dots,K$\,\textbf{:}\\
\hspace*{3em}$S_k \gets \displaystyle\sum_{J\in L}\ \sum_{j=0}^{k}\binom{k}{j}\,c_{\mathrm{sub}}(J)^{\,k-j}\,M_j\!\bigl(B^\ast(J)\bigr)$ \cmt{binomial collapse, Eq.~\eqref{eq:collapse}}\\[2pt]
\hspace*{3em}$\mathrm{acc}_k \gets \mathrm{acc}_k + w\cdot S_k \pmod{p_i}$ \ for each $i=1,\dots,4$\\[1pt]
\textbf{return} $\mathrm{acc}_k = G(m{+}1,k)$ \ for $k=0,\dots,K$
\vspace{5pt}\hrule\vspace{4pt}
\noindent{\small\textbf{where} $\textsc{UpZeta}(f)[x]=\textstyle\sum_{y\in L:\,y\supseteq x}f[y]$ and
$\textsc{DownZeta}(f)[x]=\textstyle\sum_{y\in L:\,y\subseteq x}f[y]$ are the up- and down-cumulative sums of
$f$ over the ideal lattice $L=\mathcal{J}(Q)$, each computed in one linear-extension pass~\cite{BHKKNP}. Here
$\mathbf 1$ is the constant function~$1$, $c_{\sup}^{\,j}$ its $j$-th pointwise power, and $n=|Q|=m$ the
number of join-irreducibles of $L$.}
\vspace{3pt}\hrule
\end{algorithm}

\subsection{Arithmetic and reproducibility}
The moments are large integers: $G(16,4)$ has $40$ digits, and the accumulators sum $w\cdot S_k$ over all
$6.83\times10^{13}$ parents, each weight $w=15!/|\Aut Q|$ being itself as large as $15!\approx1.3\times10^{12}$.
Carrying these running sums in exact multiple-precision arithmetic would turn every one of those $10^{13}$
updates into a variable-length big-integer operation: each addition and multiplication would span several
machine words and need carry propagation and dynamic storage rather than a single instruction, and as this is
the innermost work of the sweep, that cost would dominate the whole computation. We avoid it by accumulating
modulo a fixed set of primes.
Reduction modulo a prime is a ring homomorphism, so a moment may be summed prime by prime, each partial sum
reduced as soon as it is formed, and its true value reconstructed only at the end; every operation in the hot
loop is then a fixed-width machine-word multiply and add, and the running accumulators never grow.

We accumulate modulo the four $61$-bit primes
\[
2305843009213693951,\quad 2305843009213693921,\quad 2305843009213693907,\quad 2305843009213693723
\]
the first of which is the Mersenne prime $2^{61}-1$. The width is chosen so that the product of two residues,
each below $2^{61}$, stays below $2^{122}$ and fits in a single $128$-bit word: each modular multiplication is
one machine multiply-and-reduce, with no overflow handling and no big-integer library in the inner loop.

The true moments are reconstructed by the Chinese Remainder Theorem, which recovers an integer uniquely from
its residues whenever the integer is smaller than the product of the moduli. The four primes multiply to a
value between $2^{243}$ and $2^{244}$, and even any three of them exceed $2^{182}$, far above the moments
themselves (the largest, $G(16,4)$, is below $2^{132}$, and $G(16,3)$ is about $2^{122}$). Any three of the
primes therefore already determine each moment exactly, and the fourth is redundant by design: it serves as an
arithmetic check (Figure~\ref{fig:crt}). Because the result is over-determined, a single corrupted residue, of the kind a transient
hardware fault could introduce over a run of $10^{13}$ parents, leaves the four residues mutually inconsistent
and throws the reconstruction far outside the known magnitude, so the fault is caught rather than silently
absorbed.

\begin{figure}[t]
\centering
\begin{tikzpicture}[x=1cm,y=1cm,>=stealth,
   bx/.style={draw=black!55,rounded corners=2pt,fill=blue!4,inner sep=3pt,align=center,font=\scriptsize},
   rb/.style={draw=black!50,rounded corners=1.5pt,fill=orange!9,inner sep=2.5pt,align=center,
              font=\scriptsize,minimum width=2.3cm},
   op/.style={font=\scriptsize\itshape},
   note/.style={font=\scriptsize,align=center},
   ar/.style={->,thick,shorten >=1pt,shorten <=1pt}]
  \node[bx,minimum width=2.5cm] (par) at (0,0){every parent $Q$\\ adds $w\,S_k$};
  \node[rb] (r1) at (3.85,1.05){$\mathrm{acc}_k \bmod p_1$};
  \node[rb] (r2) at (3.85,0.35){$\mathrm{acc}_k \bmod p_2$};
  \node[rb] (r3) at (3.85,-0.35){$\mathrm{acc}_k \bmod p_3$};
  \node[rb] (r4) at (3.85,-1.05){$\mathrm{acc}_k \bmod p_4$};
  \draw[thick] (par.east) -- (2.05,0);
  \draw[gray!60] (2.05,-1.05) -- (2.05,1.05);
  \draw[ar](2.05,1.05)--(r1.west); \draw[ar](2.05,0.35)--(r2.west);
  \draw[ar](2.05,-0.35)--(r3.west); \draw[ar](2.05,-1.05)--(r4.west);
  \node[op,anchor=north] at (1.7,-0.16){$\bmod\,p_i$};
  \draw[decorate,decoration={brace,amplitude=4pt},gray!65] (5.1,1.3)--(5.1,-1.3);
  \node[bx] (crt) at (6.55,0){CRT};
  \draw[ar](5.2,0)--(crt.west);
  \node[bx,fill=blue!9] (g) at (8.7,0){$G(m{+}1,k)$\\ exact, ${\approx}2^{132}$};
  \draw[ar](crt.east)--(g.west);
  \node[note,anchor=north,text width=11.6cm] at (3.9,-1.65){$p_1p_2p_3p_4>2^{243}$, and even any three exceed $2^{182}$, far above the moments ($G(16,4)<2^{132}$): three
    primes already reconstruct each moment exactly. The fourth over-determines the result, so a residue
    corrupted by a transient hardware fault makes the four inconsistent and throws the reconstruction outside
    the known magnitude, where it is caught.};
\end{tikzpicture}
\caption{Accumulation and reconstruction across parents. Each parent's weighted moment $w\,S_k$ is reduced
modulo the four $61$-bit primes and added to the running residues $\mathrm{acc}_k \bmod p_i$; after all
${\approx}6.8\times10^{13}$ parents the four residues are combined by the Chinese Remainder Theorem into the
exact moment $G(m{+}1,k)$. Three primes already suffice (their product exceeds the moment), and the fourth is
a redundancy check.}
\label{fig:crt}
\end{figure}

The sweep parallelizes
across disjoint sets of $15$-point parents. The moment $G(15,4)$ is obtained from the $15$-point posets directly (the same sweep, tallying
$\dd(Q)^4$ per poset), and the constant $A$ of~\eqref{eq:Adef} from the enumeration of
posets on at most $14$ points. Isomorphism handling uses \texttt{nauty}~\cite{nauty}.

\section{Computation and validation}\label{sec:validation}

\subsection{Running the sweep}\label{sec:sweep}
One representative of each unlabeled
$15$-point poset was emitted by the generator \texttt{genposetg} (the poset mode of the
\texttt{nauty}/\texttt{gtools} suite~\cite{nauty}), which realises the orderly generation of
Section~\ref{sec:method}.

The parallelism is over the $15$-point parents themselves. The generator accepts a parameter restricting
it to one block of a partition of its output, so assigning the blocks to separate workers divides the set
of parents into disjoint subsets that are swept independently. Two properties make this need no
communication between workers. First, orderly generation accepts or rejects each poset by a local
canonical-form test, so no worker has to see another's output to know which parents are its own. Second,
the kernel's contribution is additive across parents (each parent adds $w\cdot s$ to the accumulators of
Algorithm~\ref{alg:kernel}), so the partial sums formed on disjoint blocks combine by addition. Each worker
ran the kernel (the C program \texttt{poset\_moment\_filter}) over its
block of parents, maintaining the four residues $\mathrm{acc}_k \bmod p_i$ for $k=0,\dots,4$ and $i=1,\dots,4$.

The full moments are then recovered by summing the per-block partial residues prime by prime and applying
the Chinese Remainder Theorem as in Section~\ref{sec:method}; this harvest is a single streaming pass over the
worker outputs, with no intermediate posets retained and a single $128$-bit accumulator sufficing for the
inner products of Algorithm~\ref{alg:kernel}. The moment $G(15,4)$ was produced by sweeping the $15$-point posets directly, and the constant $A$ by
sweeping the posets on at most $14$ points; the insertion harvest above is reserved for the $16$-point
moment $G(16,3)$. The kernel and harvest code, the four primes, and a
per-shard residue table are listed under Data availability.

\subsection{Validation}
The sweep over all $A000112(15)=68{,}275{,}077{,}901{,}156$ unlabeled $15$-point posets yields, after CRT
reconstruction, the moments $G(16,0\ldots4)$. These satisfy the checks below; one of them is independent
of the computation itself: a residue predicted by the modular periodicity of $P(n)$.

\begin{itemize}
\item \textbf{Completeness.} The total parent count returned by the sweep equals
$A000112(15)=68{,}275{,}077{,}901{,}156$ exactly, which would fail were any parent omitted or counted
twice.
\item \textbf{Weight checksum.} $\sum_Q w \equiv P(15) \pmod{p}$ for each of the four primes, with
$P(15)=77{,}567{,}171{,}020{,}440{,}688{,}353{,}049{,}939$.
\item \textbf{Lower anchors.} $G(16,0)=P(16)$, and $G(16,1)$, $G(16,2)$ agree with their
Ern\'e--Stege values. At $k=0$ the harvest sum~\eqref{eq:harvest} reduces to the weighted count of all
admissible insertions, so the identity $G(16,0)=P(16)$ would fail if any insertion were dropped or
double-counted.
\item \textbf{Congruence.} A congruence recorded in the OEIS entry for \texttt{A001035}~\cite{OEIS}
predicts $P(19)\equiv 163{,}279{,}579 \pmod{232{,}792{,}560}$, the modulus being
$\operatorname{lcm}(1,\dots,20)=2^4\cdot3^2\cdot5\cdot7\cdot11\cdot13\cdot17\cdot19$; the broader
modular-finiteness context for such counting sequences is the Specker--Blatter
theorem~\cite{SpeckerBlatter,FischerMakowsky}. The computed value agrees,
$P(19)\equiv 163{,}279{,}579 \pmod{232{,}792{,}560}$.
\item \textbf{Magnitude.} $\log_2 P(19)\approx128.93$ continues the smooth progression of the known terms
($\log_2 P(n)\approx76.4,\,86.0,\,96.1,\,106.6,\,117.5$ for $n=14,\dots,18$); a gross error in $G(16,3)$ would
fall well outside this trend.
\end{itemize}

\section{Results}\label{sec:results}

Table~\ref{tab:results} collects the principal outputs. The headline value is the $39$-digit integer
\[
P(19)=646{,}099{,}441{,}937{,}791{,}106{,}493{,}755{,}218{,}560{,}442{,}089{,}979
\]
the first new term of \texttt{A001035} since Brinkmann and McKay reached $n=18$, and with it the
labeled topology count $T(19)=$\texttt{A000798}$(19)$ obtained from the Stirling transform~\eqref{eq:stirling}.
We are not aware of any independent recomputation of $P(17)$ or $P(18)$, nor of any prior value
for $P(19)$, so the result rests on the internal consistency established in Section~\ref{sec:validation}.
Among those checks the congruence is the most stringent external one: the residue
$P(19)\equiv163{,}279{,}579 \pmod{232{,}792{,}560}$ was recorded in the OEIS as a \emph{prediction} for the
nineteenth term, and the computed value reproduces it exactly.

The same sweep yields the new $16$-point moments $G(16,3)$ and $G(16,4)$. Of the two moments
in~\eqref{eq:p19} not absorbed into the constant~$A$, only $G(16,3)$ requires data beyond the catalogued
range of at most $15$ points: it is a $16$-point moment, whereas $G(15,4)$ is a moment over the catalogued $15$-point posets, summed directly from their
antichain-count histogram, and demands no frontier advance.
The Ern\'e--Stege reduction is structured so that this $16$-point moment cannot be folded into~$A$, unlike the
lower ones. The moment $G(16,4)$, produced by the same pass, is the $16$-point fourth-power moment that enters
the reduction~\eqref{eq:reduction} for $P(20)$; we bank it for that step and report the supporting moment
$G(15,4)$ as well.

The route combines the classical Ern\'e--Stege reduction with the isomorphism-free harvest of Heitzig and
Reinhold, carried one diagonal beyond the published record. Reaching that diagonal at this scale rests on the
moment kernel of Section~\ref{sec:kernel}, which transfers the entire $(m{+}1)$-point moment off each
$m$-point parent's ideal lattice in a single pass. Its novelty lies in reading a child's moment directly
off the parent, where the classical pipelines record only the parent's own count.
The principal contribution is the values themselves, together with the supporting moments that follow from the
same computation.

\begin{table}[tb]
\centering
\small
\begin{tabular}{@{}ll@{}}
\toprule
Quantity & Value \\
\midrule
$P(19)=$\texttt{A001035}$(19)$ & $646099441937791106493755218560442089979$ \\
$T(19)=$\texttt{A000798}$(19)$ & $689054943207246404281592791142107048261$ \\
$G(16,3)$ & $5322963351172775869497071016032650486$ \\
$G(16,4)$ & $2954997625790351969485154266039478036626$ \\
\midrule
$G(16,0)=P(16)$ & $83480529785490157813844256579$ \\
$G(16,1)$ & $28441643117705315333254490986318$ \\
$G(16,2)$ & $11344858065618251316427764256980898$ \\
\texttt{A000798}$(17)$ & $134137950093337880672321868725846$ \\
\bottomrule
\end{tabular}
\caption{Principal results. $P(19)$ has $39$ digits. $G(16,3)$ is the new moment required by the
reduction; $G(16,4)$, also apparently new, enters the $20$-point reduction. The lower block collects the
validation anchors of Section~\ref{sec:validation}: $G(16,0)=P(16)$, with $G(16,1)$ and $G(16,2)$ matching
their Ern\'e--Stege values, and the known topology count \texttt{A000798}$(17)$, which the Stirling
transform~\eqref{eq:stirling} reproduces as a check.}
\label{tab:results}
\end{table}

\section{Concluding remarks}

This places the labeled frontier at $n=19$, one step beyond the $P(18)$ of Brinkmann and
McKay~\cite{BrinkmannMcKay}. The same run also yields the moment $G(16,4)$, which enters the corresponding
reduction~\eqref{eq:reduction} for $P(20)$; the remaining inputs to that reduction include a moment at the
$17$-point level, whose evaluation would extend the same approach one diagonal further, over the larger
family of $16$-point posets.

\section*{Data availability}
The values $P(19)$, $T(19)$, and $G(16,k)$ for $k\le 4$, together with the kernel and harvest code, the
$O(\dd^2)$ reference implementation, the four CRT primes, a per-shard residue table
(one row per shard, summing prime by prime to the reconstructed $G(16,k)$), the antichain-count histograms of
the labeled posets on at most $15$ points, and the script that reconstructs the constant $A$ and the moment
$G(15,4)$ from those histograms and checks them against the published values, are available in the public
repository \url{https://github.com/Rafael-Ayala/posets-and-topologies-19}. The new terms
$P(19)=$\texttt{A001035}$(19)$ and $T(19)=$\texttt{A000798}$(19)$ have been submitted to the OEIS, together
with the moment sequences $m\mapsto G(m,k)=\sum_{Q}\dd(Q)^k$ (summed over the labeled posets $Q$ on $[m]$)
for each $k=1,\dots,4$ as new entries, the case $k=0$ being \texttt{A001035}. Since $\dd(Q)$ is the number
of order ideals of $Q$, the $k=1$ sequence counts the pairs $(Q,D)$ of a labeled poset and one of its
order ideals (down-sets).


\begin{thebibliography}{12}
\bibitem{BrinkmannMcKay} G.~Brinkmann and B.~D.~McKay, \emph{Posets on up to 16 points}, Order
\textbf{19} (2002), no.~2, 147--179.
\bibitem{ErneStege} M.~Ern\'e and K.~Stege, \emph{Counting finite posets and topologies}, Order
\textbf{8} (1991), 247--265.
\bibitem{HeitzigReinhold} J.~Heitzig and J.~Reinhold, \emph{The number of unlabeled orders on fourteen
elements}, Order \textbf{17} (2000), 333--341.
\bibitem{BHKKNP} A.~Bj\"orklund, T.~Husfeldt, P.~Kaski, M.~Koivisto, J.~Nederlof and P.~Parviainen,
\emph{Fast zeta transforms for lattices with few irreducibles}, in \emph{Proc.\ 23rd Annual ACM--SIAM
Symposium on Discrete Algorithms (SODA)}, 2012, pp.~1436--1444; expanded in ACM Trans.\ Algorithms
\textbf{12} (2016), no.~1, Art.~4.
\bibitem{nauty} B.~D.~McKay and A.~Piperno, \emph{Practical graph isomorphism, II}, J.~Symbolic Comput.
\textbf{60} (2014), 94--112.
\bibitem{McKay98} B.~D.~McKay, \emph{Isomorph-free exhaustive generation}, J.~Algorithms \textbf{26}
(1998), no.~2, 306--324.
\bibitem{KR} D.~J.~Kleitman and B.~L.~Rothschild, \emph{Asymptotic enumeration of partial orders on a
finite set}, Trans.\ Amer.\ Math.\ Soc.\ \textbf{205} (1975), 205--220.
\bibitem{SpeckerBlatter} C.~Blatter and E.~Specker, \emph{Recurrence relations for the number of labeled
structures on a finite set}, in \emph{Logic and Machines: Decision Problems and Complexity}, Lecture
Notes in Comput.\ Sci.\ \textbf{171}, Springer, 1984, pp.~43--61.
\bibitem{FischerMakowsky} E.~Fischer and J.~A.~Makowsky, \emph{Extensions and limits of the
Specker--Blatter theorem}, in \emph{Proc.\ 32nd EACSL Annual Conference on Computer Science Logic (CSL)},
LIPIcs \textbf{288}, 2024, Art.~26.
\bibitem{OEIS} OEIS Foundation Inc., \emph{The On-Line Encyclopedia of Integer Sequences},
\texttt{https://oeis.org}, sequences \texttt{A001035}, \texttt{A000798}, \texttt{A000112} (2026).
\bibitem{Birkhoff} G.~Birkhoff, \emph{Rings of sets}, Duke Math.\ J.\ \textbf{3} (1937), no.~3, 443--454.
\bibitem{Stanley} R.~P.~Stanley, \emph{Enumerative Combinatorics, Volume~1}, 2nd ed., Cambridge Studies in
Advanced Mathematics \textbf{49}, Cambridge University Press, 2012.
\end{thebibliography}
\end{document}